\documentclass[11pt,twoside]{article}

\newcommand{\HeadTitle}{Right edge rates of the zeros of $\widetilde{\Xi}_n$ and $\widetilde{\Lambda}_n$}

\newcommand{\HeadTitleTwo}{\begin{center}
\Large{\textit{Right edge rates of the zeros of $\widetilde{\Xi}_n$ and $\widetilde{\Lambda}_n$}}
\end{center}}

\usepackage[margin=0.8in]{geometry}
\usepackage{amsmath, amssymb, amsfonts, mathtools}
\usepackage{amsthm}
\usepackage{mathrsfs}
\usepackage{stmaryrd}
\usepackage{esint}
\usepackage{eqnalign}

\usepackage{graphicx}
\usepackage{enumitem}
\usepackage[most]{tcolorbox}
\usepackage{float}
\usepackage{listings}
\usepackage{array}
\usepackage{booktabs}
\usepackage{xcolor}
\usepackage{emptypage}
\usepackage{tensor}
\usepackage{tabularx}

\usepackage{fancyhdr}
\usepackage{hyperref}
\usepackage[nameinlink,noabbrev]{cleveref}

\usepackage{titlesec}
\usepackage[T1]{fontenc}
\usepackage[utf8]{inputenc}
\usepackage{lmodern}
\usepackage{titling}

\usepackage[backend=biber,style=numeric]{biblatex}
\providecommand{\HeadTitle}{} 
\providecommand{\HeadTitleTwo}{} 
\providecommand{\HeadAuthor}{Luc Ramsès TALLA WAFFO} 

\titleformat{\section}[block]
  {\normalfont\large\bfseries\itshape\centering}
  {§\thesection.}
  {1em}
  {}

\titleformat{\subsection}
  {\normalfont\normalsize\bfseries}
  {\thesubsection}
  {1em}
  {}

\newtheorem{theorem}{Theorem}[section]
\newtheorem{lemma}[theorem]{Lemma}

\crefname{example}{example}{examples}
\Crefname{example}{Example}{Examples}

\crefname{corollary}{corollary}{corollaries}
\Crefname{corollary}{Corollary}{Corollaries}

\crefname{definition}{definition}{definitions}
\Crefname{definition}{Definition}{Definitions}

\crefname{remark}{remark}{remarks}
\Crefname{remark}{Remark}{Remarks}

\crefname{conjecture}{conjecture}{conjectures}
\Crefname{conjecture}{Conjecture}{Conjectures}

\crefname{lemma}{lemma}{lemmas}
\Crefname{lemma}{Lemma}{Lemmas}

\crefname{proposition}{proposition}{propositions}
\Crefname{proposition}{Proposition}{Propositions}

\crefname{theorem}{theorem}{theorems}
\Crefname{theorem}{Theorem}{Theorems}

\numberwithin{equation}{section}

\usepackage{fontspec}
\begin{document}

\thispagestyle{fancy}

\vspace{0.2cm}

\begin{center}
\Large{\HeadTitleTwo}
\end{center}

\hspace{3cm}

\begin{center}
Luc Ramsès TALLA WAFFO \\
Technische Universität Darmstadt\\
Karolinenplatz 5, 64289 Darmstadt, Germany\\
ramses.talla@stud.tu-darmstadt.de\\
\vspace{0.5cm}
April 28, 2026
\end{center}

\begin{abstract}
We consider the two families of even polynomials $\Xi_n$ and $\Lambda_n$ studied in~\cite{TallaWaffo2026arxiv2602.16761}, together with the rescaled polynomials $\widetilde{\Xi}_n(x):=\Xi_n(\sqrt{x})$ and $\widetilde{\Lambda}_n(x):=\Lambda_n(\sqrt{x})$, $n\ge2$. Their zeros are real, simple, and contained in $(0,1)$. Writing them as $0<x^{(\Xi)}_{1,n}<\cdots<x^{(\Xi)}_{n-1,n}<1$ and $0<x^{(\Lambda)}_{1,n}<\cdots<x^{(\Lambda)}_{n-1,n}<1$, we study the asymptotic behaviour of the largest zeros $x^{(\Xi)}_{n-1,n}$ and $x^{(\Lambda)}_{n-1,n}$. We prove that the two families have different exponential rates at the right endpoint:
\[
  \frac{1}{n-1}\log\bigl(1-x^{(\Lambda)}_{n-1,n}\bigr)\to-\log4,
  \qquad
  \frac{1}{n-1}\log\bigl(1-x^{(\Xi)}_{n-1,n}\bigr)\to-\log9.
\]
Thus, although the two families share the same global limiting zero distribution, their extreme right zeros approach $1$ on different exponential scales. The proof is based on the representation of $\Xi_n$ and $\Lambda_n$ in terms of Eulerian polynomials of type~B and type~A, respectively, and on an elementary estimate for the smallest negative zero in terms of the first non-constant coefficient.
\end{abstract}

\vspace{0.2cm}

\paragraph{Notation.}
Throughout this manuscript, $\mathbb{N}$ denotes the set of positive integers and $\mathbb{N}_0$ the set of non-negative integers. We write $A_m(x)$ for the Eulerian polynomial of type~A and $B_m(x)$ for the Eulerian polynomial of type~B. The zeros of $\widetilde{\Xi}_n$ and $\widetilde{\Lambda}_n$ in $(0,1)$ are denoted by $x^{(\Xi)}_{k,n}$ and $x^{(\Lambda)}_{k,n}$, respectively, and are ordered as
\[
  0<x^{(\Xi)}_{1,n}<\cdots<x^{(\Xi)}_{n-1,n}<1,
  \qquad
  0<x^{(\Lambda)}_{1,n}<\cdots<x^{(\Lambda)}_{n-1,n}<1.
\]

\vspace{0.5cm}

\section*{Introduction}

In~\cite{TallaWaffo2026arxiv2602.16761}, the polynomials $\Xi_n$ and $\Lambda_n$ were introduced in connection with integral representations for the normalized values $\beta(2n)/\pi^{2n-1}$ and $\zeta(2n+1)/\pi^{2n}$. A central feature of these polynomials is that they admit explicit representations in terms of Eulerian polynomials. More precisely, for $n\ge1$ one has
\begin{equation}\label{eq:Xi-eulerian}
  \Xi_n(t)
  =
  \frac{(-1)^{n+1}}{2^{4n-1}(2n-1)!}
  \frac{(1+t)^{2n-1}}{t}\,
  B_{2n-1}\!\left(-\frac{1-t}{1+t}\right),
\end{equation}
and
\begin{equation}\label{eq:Lambda-eulerian}
  \Lambda_n(t)
  =
  \frac{(-1)^{n+1}}{(2^{2n+1}-1)(2n)!}
  \frac{(1+t)^{2n-1}}{t}\,
  A_{2n}\!\left(-\frac{1-t}{1+t}\right).
\end{equation}
Here $A_m$ and $B_m$ denote the Eulerian polynomials of type~A and type~B, respectively.

Since $\Xi_n$ and $\Lambda_n$ are even polynomials, it is natural to introduce the rescaled families
\begin{equation}\label{eq:tilde-defs}
  \widetilde{\Xi}_n(x):=\Xi_n(\sqrt{x}),
  \qquad
  \widetilde{\Lambda}_n(x):=\Lambda_n(\sqrt{x}).
\end{equation}
The results of~\cite{TallaWaffo2026arxiv2602.16761} imply that these polynomials have degree $n-1$ and that all their zeros are real, simple, and contained in $(0,1)$.

The global distribution of these zeros is governed by the limiting distribution function
\[
  F(x)
  =
  \frac{2}{\pi}
  \arctan\!\left(
    \frac1\pi\log\frac{1+\sqrt{x}}{1-\sqrt{x}}
  \right),
  \qquad 0<x<1.
\]
This limiting law describes the macroscopic location of the zeros. For instance, at the left endpoint one obtains $x_{k,n}\sim(\pi^4/16)k^2/(n-1)^2$ for fixed $k$, so the smallest zeros live on the scale $(n-1)^{-2}$.

The purpose of the present note is to isolate the corresponding phenomenon at the right endpoint. In contrast with the left edge, the largest zeros approach $1$ exponentially fast. Moreover, the two families $\widetilde{\Xi}_n$ and $\widetilde{\Lambda}_n$ do not have the same exponential rate. Our main result is
\[
  \frac{1}{n-1}\log\bigl(1-x^{(\Lambda)}_{n-1,n}\bigr)\to-\log4,
  \qquad
  \frac{1}{n-1}\log\bigl(1-x^{(\Xi)}_{n-1,n}\bigr)\to-\log9.
\]
The proof is elementary once the Eulerian representations \eqref{eq:Xi-eulerian} and \eqref{eq:Lambda-eulerian} are used.

\vspace{0.3cm}

\section{Preliminaries}\label[section]{sec:preliminaries}

We first record the elementary transformation connecting $(0,1)$ with the negative real axis.

\begin{lemma}\label[lemma]{lem:mobius}
Let $0<x<1$ and define $z=-(1-\sqrt{x})/(1+\sqrt{x})$. Writing $z=-a$, with $a\in(0,1)$, one has $x=((1-a)/(1+a))^2$ and therefore
\begin{equation}\label{eq:one-minus-x}
  1-x=\frac{4a}{(1+a)^2}.
\end{equation}
In particular, $1-x\sim4a$ as $a\to0^+$.
\end{lemma}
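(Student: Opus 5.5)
The plan is a direct computation, since the lemma is simply the algebraic inversion of the Möbius map $t\mapsto -(1-t)/(1+t)$ evaluated at $t=\sqrt{x}$.

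First, set $s=\sqrt{x}\in(0,1)$. Then $a=-z=(1-s)/(1+s)$. Since $0<1-s<1+s$, we get $a\in(0,1)$. Solving $a(1+s)=1-s$ for $s$ gives $s(1+a)=1-a$, so $s=(1-a)/(1+a)$ and hence $x=s^2=\bigl((1-a)/(1+a)\bigr)^2$. This shows that $x\mapsto a$ is a bijection of $(0,1)$ onto $(0,1)$ with the stated inverse.

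Next, the formula for $1-x$ follows from a difference of squares:
\[
1-x=\frac{(1+a)^2-(1-a)^2}{(1+a)^2}=\frac{4a}{(1+a)^2}.
\]
Finally, as $a\to0^+$ we have $(1+a)^{-2}\to1$, so $(1-x)/(4a)\to1$, that is, $1-x\sim4a$.

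There is no real obstacle. The only point needing care is checking the range $a\in(0,1)$, which ensures the square root branch $s=(1-a)/(1+a)>0$ is the correct one. For later use it may also help to record that $\log(1-x)=\log a+\log4+O(a)$; this converts the exponential rate of the smallest-in-modulus negative zero of the Eulerian polynomials into the rate of $1-x_{n-1,n}$.
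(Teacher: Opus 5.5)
Your proposal is correct and follows essentially the same route as the paper: invert $a=(1-\sqrt{x})/(1+\sqrt{x})$ to get $\sqrt{x}=(1-a)/(1+a)$, then compute $1-x$ by a difference of squares. You also check that $a\in(0,1)$ and write out the limit $(1-x)/(4a)\to1$, both of which the paper leaves implicit.
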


\begin{proof}
Since $z=-a$, we have $a=(1-\sqrt{x})/(1+\sqrt{x})$. Solving for $\sqrt{x}$ gives $\sqrt{x}=(1-a)/(1+a)$, hence $x=((1-a)/(1+a))^2$. Consequently,
\[
  1-x
  =
  1-\left(\frac{1-a}{1+a}\right)^2
  =
  \frac{(1+a)^2-(1-a)^2}{(1+a)^2}
  =
  \frac{4a}{(1+a)^2},
\]
which proves the claim.
\end{proof}

The following elementary lemma is the key estimate used below.

\begin{lemma}\label[lemma]{lem:first-coeff-root}
Let $P(z)=1+c_1z+c_2z^2+\cdots+c_dz^d$ be a real polynomial with positive coefficients and only real negative zeros. Write its zeros as $-a_1,-a_2,\ldots,-a_d$, where $0<a_1\le a_2\le\cdots\le a_d$. Then
\begin{equation}\label{eq:a1-bounds}
  \frac1{c_1}\le a_1\le \frac{d}{c_1}.
\end{equation}
Consequently, $\log a_1=-\log c_1+O(\log d)$.
\end{lemma}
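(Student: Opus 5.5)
The plan is to factor $P$ over its zeros and compare $c_1$ with the reciprocals $1/a_j$. Since the constant term is $1$ and the zeros are $-a_1,\ldots,-a_d$, we have
\[
  P(z)=\prod_{j=1}^{d}\Bigl(1+\frac{z}{a_j}\Bigr).
\]
To justify this, note that $P(z)=c_d\prod_j(z+a_j)$. Evaluating at $z=0$ gives $1=c_d\prod_j a_j$, so $c_d=\prod_j a_j^{-1}$, which yields the factorization above. Comparing the coefficients of $z$ on both sides then gives the identity
\[
  c_1=\sum_{j=1}^{d}\frac1{a_j}.
\]
All terms in this sum are positive because every $a_j>0$.

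The two inequalities in \eqref{eq:a1-bounds} come directly from this sum.

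\textbf{Lower bound.} The sum is at least its single term $1/a_1$, so $c_1\ge 1/a_1$, that is, $a_1\ge 1/c_1$.

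\textbf{Upper bound.} Since $a_1\le a_j$ for every $j$, each term satisfies $1/a_j\le 1/a_1$. Hence $c_1\le d/a_1$, that is, $a_1\le d/c_1$.

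Taking logarithms of $1/c_1\le a_1\le d/c_1$ gives
\[
  -\log c_1\le \log a_1\le -\log c_1+\log d.
\]
This is $\log a_1=-\log c_1+O(\log d)$, with the implied constant equal to $1$ for $d\ge2$; for $d=1$ the error term is $0$.

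There is no serious obstacle. The only point needing care is the normalization of the leading coefficient in the factorization. The positivity of the coefficients is not needed beyond guaranteeing $c_1>0$, and that already follows from the identity $c_1=\sum_j 1/a_j$. The essential hypothesis is that all zeros are real and negative: this makes every $1/a_j$ positive, so no cancellation can occur in the sum.
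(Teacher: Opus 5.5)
Your proposal is correct and follows the paper's proof: factor $P(z)=\prod_{j}(1+z/a_j)$, read off $c_1=\sum_j 1/a_j$, and bound this sum below by $1/a_1$ and above by $d/a_1$. Your extra justification of the leading-coefficient normalization and your remark on the $d=1$ case are sound additions the paper leaves implicit.
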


\begin{proof}
Since the constant term of $P$ equals $1$, we may write $P(z)=\prod_{j=1}^{d}(1+z/a_j)$. Comparing the coefficient of $z$ gives $c_1=\sum_{j=1}^{d}1/a_j$. Since $a_1\le a_j$ for all $j$, we have $1/a_j\le1/a_1$, and thus $c_1\le d/a_1$, which gives $a_1\le d/c_1$. On the other hand, $c_1\ge1/a_1$, hence $a_1\ge1/c_1$. This proves \eqref{eq:a1-bounds}. Taking logarithms gives $-\log c_1\le\log a_1\le-\log c_1+\log d$, which is equivalent to the claimed estimate.
\end{proof}

We shall also need the first non-constant coefficients of the Eulerian polynomials of type~A and type~B.

\begin{lemma}\label[lemma]{lem:first-coefficients}
For the Eulerian polynomials of type~A and type~B, respectively, one has
\begin{align}
  [z]A_m(z)&=2^m-m-1, \label{eq:A-first-coeff}\\
  [z]B_m(z)&=3^m-(m+1). \label{eq:B-first-coeff}
\end{align}
\end{lemma}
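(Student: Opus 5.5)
The plan is to read off both coefficients from the standard combinatorial definitions. For type A we use the descent convention $A_m(z)=\sum_{\sigma\in S_m} z^{\operatorname{des}(\sigma)}$, so that $A_m(0)=1$; this normalisation is forced by the claimed value, since the excedance-shifted convention would give a different coefficient. For type B we use $B_m(z)=\sum_{w\in B_m} z^{\operatorname{des}_B(w)}$ over signed permutations, where a descent at position $0$ means $w(1)<0$ (with the convention $w(0)=0$). The quantity $[z]A_m$ then counts permutations with exactly one descent, and $[z]B_m$ counts signed permutations with exactly one type-B descent. If one prefers generating functions, the same numbers come from the identities $\sum_{k\ge0}(k+1)^m z^k=A_m(z)/(1-z)^{m+1}$ and $\sum_{k\ge0}(2k+1)^m z^k=B_m(z)/(1-z)^{m+1}$. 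I would present the generating-function route as the main argument, because it handles both families uniformly, and mention the combinatorial count as a check.

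\textbf{Type A.} Multiply the first identity by $(1-z)^{m+1}$ and compare coefficients of $z^0$ and $z^1$:
\[
A_m(z)=(1-z)^{m+1}\sum_{k\ge0}(k+1)^m z^k .
\]
The constant term is $1$. The coefficient of $z$ is $2^m-(m+1)\cdot1^m=2^m-m-1$, which is \eqref{eq:A-first-coeff}. As a sanity check, a permutation with one descent is determined by choosing the set of entries in its first increasing run. There are $2^m$ subsets, and we discard the $m+1$ subsets $\{1,\dots,j\}$, since these produce the identity permutation. This again gives $2^m-m-1$.

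\textbf{Type B.} Likewise,
\[
B_m(z)=(1-z)^{m+1}\sum_{k\ge0}(2k+1)^m z^k .
\]
The constant term is $1$, and the coefficient of $z$ is $3^m-(m+1)\cdot1^m=3^m-(m+1)$, which is \eqref{eq:B-first-coeff}.

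\textbf{Main obstacle.} The only real difficulty is justifying the two generating-function identities under the normalisation of $A_m$ and $B_m$ used in~\cite{TallaWaffo2026arxiv2602.16761}. For type A this is Carlitz's classical identity, and for type B it is Brenti's identity. Alternatively, both follow by induction from the recurrences, for instance
\[
B_{m}(z)=\bigl(1+(2m-1)z\bigr)B_{m-1}(z)+2z(1-z)B_{m-1}'(z),
\]
together with its type-A analogue, and the first coefficient can be checked directly from these recurrences. In the type-B case, differentiating at $z=0$ gives $b_m=(2m-1)+3b_{m-1}$ with $b_0=0$, where $b_m:=[z]B_m(z)$. The closed form $3^m-m-1$ satisfies this recursion, since $3^m-m-1=2m-1+3\bigl(3^{m-1}-m\bigr)$. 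I would cite the identities and include the recurrence verification as a self-contained confirmation. Throughout I would state explicitly that $A_m(0)=B_m(0)=1$, since \Cref{lem:first-coeff-root} needs a constant term equal to $1$.
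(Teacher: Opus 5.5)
Your proposal is correct and essentially matches the paper's proof. For type~B the paper uses exactly the identity $\sum_{k\ge0}(2k+1)^m z^k=B_m(z)/(1-z)^{m+1}$ and reads off the coefficient of $z$, while for type~A it simply cites the Eulerian number count $2^m-m-1$ of permutations with one descent (your ``sanity check'') instead of the type~A generating function. Your recurrence check $b_m=(2m-1)+3b_{m-1}$, $b_0=0$, is correct and is a self-contained extra the paper omits.
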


\begin{proof}
For type~A, the coefficient $[z]A_m(z)$ is the Eulerian number counting permutations of $\{1,\ldots,m\}$ with exactly one descent. This number is well known to be $\left\langle {m\atop 1}\right\rangle=2^m-m-1$, which gives \eqref{eq:A-first-coeff}.

For type~B, we use the classical identity
\[
  \sum_{k=0}^{\infty}(2k+1)^m z^k
  =
  \frac{B_m(z)}{(1-z)^{m+1}},
  \qquad |z|<1.
\]
Equivalently, $B_m(z)=(1-z)^{m+1}(1+3^mz+5^mz^2+\cdots)$. The coefficient of $z$ on the right-hand side is $3^m-(m+1)$, proving \eqref{eq:B-first-coeff}.
\end{proof}

\vspace{0.3cm}

\section{The right edge rates}\label[section]{sec:right-edge}

We now prove the main result.

\begin{theorem}[Right edge rate]\label{thm:right-edge-rate}
Let $0<x^{(\Xi)}_{1,n}<\cdots<x^{(\Xi)}_{n-1,n}<1$ be the zeros of $\widetilde{\Xi}_n$, and let $0<x^{(\Lambda)}_{1,n}<\cdots<x^{(\Lambda)}_{n-1,n}<1$ be the zeros of $\widetilde{\Lambda}_n$. Then
\begin{align}
  \lim_{n\to\infty}
  \frac{1}{n-1}
  \log\bigl(1-x^{(\Lambda)}_{n-1,n}\bigr)
  &=
  -\log 4, \label{eq:Lambda-right-rate}\\
  \lim_{n\to\infty}
  \frac{1}{n-1}
  \log\bigl(1-x^{(\Xi)}_{n-1,n}\bigr)
  &=
  -\log 9. \label{eq:Xi-right-rate}
\end{align}
\end{theorem}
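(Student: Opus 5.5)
The plan is to transfer the problem, through the Möbius map of \Cref{lem:mobius}, to the smallest-in-modulus negative zero of the relevant Eulerian polynomial, and then to read off its exponential size from the first non-constant coefficient via \Cref{lem:first-coeff-root} and \Cref{lem:first-coefficients}.

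\emph{Step 1: matching zeros.} By \eqref{eq:Lambda-eulerian}, for $0<t<1$ the prefactor $(1+t)^{2n-1}/t$ is nonzero. Hence $t=\sqrt{x}$ is a zero of $\Lambda_n$ exactly when $z=-(1-t)/(1+t)\in(-1,0)$ is a zero of $A_{2n}$. The same holds for $\Xi_n$ and $B_{2n-1}$ by \eqref{eq:Xi-eulerian}. The map $x\mapsto a=(1-\sqrt x)/(1+\sqrt x)$ is strictly decreasing on $(0,1)$. Therefore the largest zero $x_{n-1,n}$ corresponds to the smallest $a$, i.e.\ the zero of the Eulerian polynomial in $(-1,0)$ closest to the origin.

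We recall the classical facts that $A_m$ and $B_m$ have positive coefficients, constant term $1$, and only real, simple, negative zeros. For $A_m$ we use the convention without the factor $z$; if the paper's convention includes it, we divide out $z$, which is harmless here because $t\neq 0$. Since all zeros are negative, the one closest to $0$ is $-a_1$ in the notation of \Cref{lem:first-coeff-root}. We also need $a_1<1$, so that this zero actually lies in $(-1,0)$ and gives a zero $x\in(0,1)$. This follows from the bound $a_1\le d/c_1$, which is exponentially small. Hence $x_{n-1,n}=((1-a_1)/(1+a_1))^2$.

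\emph{Step 2: estimates.} For $\Lambda$, take $P=A_{2n}$. Its degree is $d\le 2n$ and, by \eqref{eq:A-first-coeff}, $c_1=2^{2n}-2n-1$. \Cref{lem:first-coeff-root} then gives
\[
  \log a_1=-\log\bigl(2^{2n}-2n-1\bigr)+O(\log n)=-2n\log2+O(\log n).
\]
For $\Xi$, take $P=B_{2n-1}$. By \eqref{eq:B-first-coeff}, $c_1=3^{2n-1}-2n$, so
\[
  \log a_1=-(2n-1)\log3+O(\log n).
\]
In both cases $a_1\to0$, and \eqref{eq:one-minus-x} gives $\log(1-x_{n-1,n})=\log 4+\log a_1-2\log(1+a_1)=\log a_1+O(1)$.

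\emph{Step 3: conclusion.} Dividing by $n-1$ and letting $n\to\infty$, the $O(\log n)$ and $O(1)$ terms vanish. Since $2n/(n-1)\to2$ and $(2n-1)/(n-1)\to2$, the limits are $-2\log2=-\log4$ and $-2\log3=-\log9$, which proves \eqref{eq:Lambda-right-rate} and \eqref{eq:Xi-right-rate}.

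The main obstacle is Step 1, not the estimates. One must check the normalization of $A_m$ (constant term $1$), justify the real-rootedness and positivity hypotheses of \Cref{lem:first-coeff-root} by citing the classical results for Eulerian polynomials of types A and B, and confirm that the zero of smallest modulus is one of the $n-1$ zeros lying in $(-1,0)$. The symmetry $z\mapsto 1/z$ of the Eulerian polynomials pairs the zeros in $(-1,0)$ with those in $(-\infty,-1)$, which is consistent with this count.
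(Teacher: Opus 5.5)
Your proposal is correct and follows essentially the same route as the paper: you pass through the Möbius map of the first lemma to the Eulerian zero $-a_1$ closest to the origin, bound it by $1/c_1\le a_1\le d/c_1$ using $c_1=4^n-2n-1$ and $c_1=3^{2n-1}-2n$, and then divide $\log a_1+O(1)$ by $n-1$. Your additional checks are points the paper's proof uses without comment: that $x\mapsto(1-\sqrt{x})/(1+\sqrt{x})$ is decreasing, that $a_1<1$, and that $A_m$ is normalized with constant term $1$.
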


\begin{proof}
We first treat $\widetilde{\Lambda}_n$. By \eqref{eq:Lambda-eulerian}, the zeros of $\widetilde{\Lambda}_n$ are obtained from the negative zeros of $A_{2n}$ through $z=-(1-\sqrt{x})/(1+\sqrt{x})$. The largest zero of $\widetilde{\Lambda}_n$ corresponds to the negative zero of $A_{2n}$ closest to the origin; write it as $-a^{(A)}_n$, with $a^{(A)}_n>0$. Then
\begin{equation}\label{eq:Lambda-largest-via-a}
  x^{(\Lambda)}_{n-1,n}
  =
  \left(\frac{1-a^{(A)}_n}{1+a^{(A)}_n}\right)^2,
\end{equation}
and, by \cref{lem:mobius},
\begin{equation}\label{eq:Lambda-gap-via-a}
  1-x^{(\Lambda)}_{n-1,n}
  =
  \frac{4a^{(A)}_n}{(1+a^{(A)}_n)^2}.
\end{equation}

Write $A_{2n}(z)=1+c^{(A)}_{n,1}z+\cdots$. By \cref{lem:first-coefficients},
\[
  c^{(A)}_{n,1}
  =
  [z]A_{2n}(z)
  =
  2^{2n}-2n-1
  =
  4^n-2n-1.
\]
Since the degree of $A_{2n}$ is $2n-1$, \cref{lem:first-coeff-root} gives
\[
  \frac{1}{c^{(A)}_{n,1}}
  \le
  a^{(A)}_n
  \le
  \frac{2n-1}{c^{(A)}_{n,1}},
\]
hence $\log a^{(A)}_n=-\log c^{(A)}_{n,1}+O(\log n)$. Since $c^{(A)}_{n,1}=4^n-2n-1$, it follows that $(n-1)^{-1}\log a^{(A)}_n\to-\log4$.

Using \eqref{eq:Lambda-gap-via-a}, we have
\[
  \log\bigl(1-x^{(\Lambda)}_{n-1,n}\bigr)
  =
  \log a^{(A)}_n+\log4-2\log(1+a^{(A)}_n).
\]
Since $a^{(A)}_n\to0$, the last two terms are $o(n)$ after division by $n-1$. Therefore $(n-1)^{-1}\log(1-x^{(\Lambda)}_{n-1,n})\to-\log4$, proving \eqref{eq:Lambda-right-rate}.

We now treat $\widetilde{\Xi}_n$. By \eqref{eq:Xi-eulerian}, the zeros of $\widetilde{\Xi}_n$ are obtained from the negative zeros of $B_{2n-1}$ through the same transformation $z=-(1-\sqrt{x})/(1+\sqrt{x})$. Let $-a^{(B)}_n$, with $a^{(B)}_n>0$, be the zero of $B_{2n-1}$ closest to the origin. Then
\[
  x^{(\Xi)}_{n-1,n}
  =
  \left(\frac{1-a^{(B)}_n}{1+a^{(B)}_n}\right)^2,
  \qquad
  1-x^{(\Xi)}_{n-1,n}
  =
  \frac{4a^{(B)}_n}{(1+a^{(B)}_n)^2}.
\]
Write $B_{2n-1}(z)=1+c^{(B)}_{n,1}z+\cdots$. By \cref{lem:first-coefficients},
\[
  c^{(B)}_{n,1}
  =
  [z]B_{2n-1}(z)
  =
  3^{2n-1}-2n.
\]
Since the degree of $B_{2n-1}$ is $2n-1$, \cref{lem:first-coeff-root} gives
\[
  \frac{1}{c^{(B)}_{n,1}}
  \le
  a^{(B)}_n
  \le
  \frac{2n-1}{c^{(B)}_{n,1}},
\]
hence $\log a^{(B)}_n=-\log c^{(B)}_{n,1}+O(\log n)$. Since $c^{(B)}_{n,1}=3^{2n-1}-2n$, we obtain
\[
  \frac{1}{n-1}\log a^{(B)}_n
  \to
  -2\log3
  =
  -\log9.
\]
Finally,
\[
  \log\bigl(1-x^{(\Xi)}_{n-1,n}\bigr)
  =
  \log a^{(B)}_n+\log4-2\log(1+a^{(B)}_n).
\]
Dividing by $n-1$ and letting $n\to\infty$ yields $(n-1)^{-1}\log(1-x^{(\Xi)}_{n-1,n})\to-\log9$, proving \eqref{eq:Xi-right-rate}.
\end{proof}

\vspace{0.4cm}

\section{Interpretation}\label[section]{sec:interpretation}

The result above shows that the two polynomial families behave differently at the extreme right edge. Although the empirical zero measures of $\widetilde{\Xi}_n$ and $\widetilde{\Lambda}_n$ have the same limiting distribution, the largest zeros have distinct exponential scales.

For $\widetilde{\Lambda}_n$, \cref{thm:right-edge-rate} gives
\[
  1-x^{(\Lambda)}_{n-1,n}
  =
  \exp\bigl(-(\log4+o(1))(n-1)\bigr),
\]
so the rightmost zero of $\widetilde{\Lambda}_n$ approaches $1$ on the scale $4^{-n}$. For $\widetilde{\Xi}_n$, one obtains instead
\[
  1-x^{(\Xi)}_{n-1,n}
  =
  \exp\bigl(-(\log9+o(1))(n-1)\bigr),
\]
so the rightmost zero approaches $1$ on the faster scale $9^{-n}$.

This phenomenon is not visible from the global limiting distribution alone. The limiting distribution captures the macroscopic distribution of zeros in $(0,1)$, but the largest zero is governed by the behaviour of the Eulerian zero closest to the origin. This zero, in turn, is controlled on the exponential scale by the first non-constant coefficient of the corresponding Eulerian polynomial. In short,
\[
  [z]A_{2n}(z)\sim4^n
  \quad\text{and}\quad
  [z]B_{2n-1}(z)\sim9^n
\]
imply the two different right-edge scales $4^{-n}$ and $9^{-n}$, respectively.

\vspace{0.4cm}

\section{Numerical illustration}\label[section]{sec:numerics}

We close with a short numerical illustration. The following table lists the normalized logarithmic gaps
\[
  R^{(\Xi)}_n
  :=
  \frac{1}{n-1}\log\bigl(1-x^{(\Xi)}_{n-1,n}\bigr),
  \qquad
  R^{(\Lambda)}_n
  :=
  \frac{1}{n-1}\log\bigl(1-x^{(\Lambda)}_{n-1,n}\bigr).
\]
The limiting values are $-\log9\approx-2.197224577$ and $-\log4\approx-1.386294361$.

\begin{center}
\begin{tabular}{c|c|c}
$n$ & $R^{(\Xi)}_n$ & $R^{(\Lambda)}_n$ \\
\hline
$10$ & $-2.165$ & $-1.386$ \\
$15$ & $-2.177$ & $-1.386$ \\
$20$ & $-2.182$ & $-1.386$ \\
$25$ & $-2.185$ & $-1.386$ \\
$30$ & $-2.187$ & $-1.386$
\end{tabular}
\end{center}

The numerical data are consistent with \cref{thm:right-edge-rate}. The convergence in the type~A case is already very sharp for moderate $n$, whereas the type~B sequence approaches its limit more slowly but clearly tends toward $-\log9$.

\vspace{0.4cm}

\section*{Acknowledgments}

The author acknowledges the use of an AI language model for assistance with the presentation of the manuscript, numerical experimentation, and verification of elementary asymptotic estimates.

\printbibliography

\end{document}